\title{\LARGE \bf
Koopman Data-Driven Predictive Control with Robust Stability and Recursive Feasibility Guarantees
}
\author{Thomas de Jong$^{1}$, Valentina Breschi$^{1}$, Maarten Schoukens$^{1}$ and Mircea Lazar$^{1}$
\thanks{$^{1}$~Department of Electrical Engineering,
        Eindhoven University of Technology, 5612 AZ Eindhoven, The Netherlands. E-mails of the authors: \texttt{t.o.d.jong@tue.nl, v.breschi@tue.nl, m.schoukens@tue.nl, m.lazar@tue.nl}.}%
}
\newcommand{\Nset}{\mathbb{N}}
\newcommand{\Rset}{\mathbb{R}}
\newcommand{\Eset}{\mathbb{E}}
\newcommand{\Zset}{\mathbb{Z}}
\newcommand{\Uset}{\mathbb{U}}
\newcommand{\Xset}{\mathbb{X}}
\newcommand{\Yset}{\mathbb{Y}}
\newcommand{\cK}{\mathcal{K}}
\newcommand{\cL}{\mathcal{L}}
\newcommand{\tini}{\text{ini}}
\newcommand{\bu}{\mathbf{u}}
\newcommand{\be}{\mathbf{e}}
\newcommand{\bx}{\mathbf{x}}
\newcommand{\bz}{\mathbf{z}}
\newtheorem{theorem}{Theorem}[section]
\newtheorem{lemma}[theorem]{Lemma}
\newtheorem{problem}[theorem]{Problem}
\newtheorem{definition}[theorem]{Definition}
\newtheorem{remark}[theorem]{Remark}
\newtheorem{assumption}[theorem]{Assumption}
\newcommand{\col}{\operatorname{col}}
\newcommand{\figurename}[1]{Fig.#1}
\begin{document}

\maketitle
\thispagestyle{empty}
\pagestyle{empty}

\begin{abstract}
In this paper, we consider the design of data-driven predictive controllers for nonlinear systems from input-output data via linear-in-control input Koopman lifted models. Instead of identifying and simulating a Koopman model to predict future outputs, we design a subspace predictive controller in the Koopman space. This allows us to learn the observables minimizing the multi-step output prediction error of the Koopman subspace predictor, preventing the propagation of prediction errors. To avoid losing feasibility of our predictive control scheme due to prediction errors, we compute a terminal cost and terminal set in the Koopman space and we obtain recursive feasibility guarantees through an interpolated initial state. As a third contribution, we introduce a novel regularization cost yielding input-to-state stability guarantees with respect to the prediction error for the resulting closed-loop system. The performance of the developed Koopman data-driven predictive control methodology is illustrated on a nonlinear benchmark example from the literature.   
\end{abstract}

\section{INTRODUCTION}
By leveraging the linearity of the predictor in the lifted Koopman space, linear-in-control input Koopman Model Predictive Control (MPC) represents a promising and appealing approach to nonlinear MPC. Indeed, the shift to the Koopman space allows one to convert the nonlinear predictive control problem into a quadratic program (QP), which can then be solved effectively and with high precision. This approach was originally proposed in \cite{korda2018linear}, where the Koopman operator is extended to controlled dynamical systems using Extended Dynamic Mode Decomposition (EDMD) and then used for predictive control purposes. It has then been extended to consider full-state measurements and input-output scenarios, as discussed in \cite{korda2018linear,korda2020koopman}.

Despite its advantages, approximating a nonlinear system with finite-dimensional linear-in-control input Koopman modeling has its limitations. Indeed, it leads to models that can be incomplete, as shown in \cite{iacob2024koopman}. Moreover, while the Koopman operator provides coordinate transformations that extend the local neighborhoods where a linear model is valid to the full basin around them \cite{lan2013linearization}, these models cannot capture the global dynamics of nonlinear systems with multiple equilibria, as any linear operator has either one fixed point at the origin or a subspace of infinitely many fixed points \cite{brunton2021modern,brunton2016koopman}.

This inherent incompleteness of linear-in-control input Koopman models results in discrepancies between real-life nonlinear system outputs and Koopman model predictions, which can be detrimental to predictive control design. In particular, it is no longer possible to guarantee stability and recursive feasibility using just terminal ingredients in the Koopman space. A solution to this problem based on robust tube MPC design in the Koopman space was recently presented in \cite{zhang2022robust}. While offering robust stability and recursive feasibility guarantees to Koopman model-based predictive control, this approach relies on tightened constraints in the Koopman space, which might lead to arbitrary conservatism as constraints on the observables do not relate linearly to the original constraints on outputs or states. Moreover, it requires the measurability of the controlled system state and it postulates the availability of an accurate Koopman model. 

These models can be identified from data using system identification methods, that however typically look at fitting one-step linear state-space dynamics, i.e., the system matrices \cite{korda2018linear,Colin2019,borghi2021koopman}. As a result, prediction errors propagate as the prediction horizon increases. This limitation is overcome in data-driven predictive control (DPC) methods, whose aim is to minimize predicted tracking errors and control efforts over the entire prediction horizon \cite{verheijen2023handbook, kohler2022state}. 

A recent observation in \cite{lazar2023basis} intertwines linear-in-control input Koopman Data-driven Predictive Control and DPC, highlighting that the former can be reformulated as subspace predictive control (SPC) in the Koopman space. The SPC approach to data-driven predictive control comes with several benefits. It removes the propagation of prediction errors, as separate multi-step predictors are identified jointly. The resulting prediction matrices are least squares optimal, yielding unbiased predictors. Last but not least, the resulting predictors are linear in the predicted control input sequence.

Motivated by these insights, in this work we propose a linear-in-control input SPC formulation in the Koopman space, which consists of three main design steps. Firstly, we parameterize the observables as basis or kernel functions that depend on past input and past/current output data of the nonlinear system. We solve a nonlinear least squares problem to learn the observables, aiming to minimize the multi-step output prediction error of the Koopman SPC predictors. It is important to note that our approach differs from that in reference \cite{iacob2021deep}. While \cite{iacob2021deep} identifies a model that requires iterating the Koopman state-space equations $N$ times to obtain the $N$-step ahead prediction, our method eliminates the need for such iteration for multi-step ahead predictions. Thirdly, we use least squares identification to compute the optimal multi-step prediction matrices allowing us to predict the evolution of the full Koopman lifted state, rather than the output, over the prediction horizon. This last choice is advantageous because it enables the computation of a terminal cost and terminal set in the Koopman space. At the same time, since the linear-in-control input Koopman predictors are still incomplete, the corresponding prediction errors might still result in a loss of feasibility despite using a terminal set in the Koopman space. To circumvent this issue, inspired by the solution proposed in \cite{kohler2022recursively} for robust stochastic MPC, we define the initial state of the Koopman model as the interpolation of the measurements based lifted state and the previously one-step ahead predicted Koopman state. We prove that this choice leads to a recursively feasible Koopman data-driven predictive control scheme, despite the prediction error. Based on this definition of the initial state, we also develop a novel regularization cost and establish that the original nonlinear system in closed-loop with the Koopman data-driven predictive controller is input-to-state stable (ISS) \cite{JiangWang01} with respect to the prediction error.

The remainder of the paper is organized as follows. After introducing the preliminaries and our problem statement in Section~\ref{sec:preliminaries}, we present methods for learning 
observables within the Koopman formulation in Section~\ref{sec:koopmanModelIdentification}. To address potential plant-model mismatches stemming from the linear-in-control input Koopman formulation, we develop robust stability and recursive feasibility conditions in Section~\ref{sec:RobustStability}. Finally, in Section~\ref{sec:IllustraticeExample}, we substantiate the effectiveness of our proposed control scheme through a nonlinear example.
\paragraph*{Notation and basic definitions} Let $\mathbb{R}$, $\mathbb{R}_{+}$ and $\mathbb{N}$ denote the set of real numbers, the set of non-negative reals, the set of non-negative integers. For every $c \in \mathbb{R}$ and $\Pi \subseteq \mathbb{R}$ define $\Pi_{\geq c} := \{k \in \Pi | k \geq c \}$ and let $\textbf{w}_{[0,k]}:= \{w(l)\}_{l\in\Zset_{[0,k]}}$. Throughout this paper, for any finite number $q\in\mathbb{N}_{\geq 1}$ of column vectors or functions $\{\xi_1,\dots,\xi_q\}$ we will make use of the operator $\col(\xi_1,\dots,\xi_q) := [\xi_1^\top,\dots,\xi_q^\top]^\top$ and for a square matrix $A$ we use the operator blkdiag($n,A$) defined as the the block diagonal matrix with $n$ matrices $A$ on the diagonal. We denote by $\rho(A)$ for some square matrix $A$ the maximum absolute value of the eigenvalues of $A$. For $c_1\in\Rset_{>0}$, a function $\varphi : \Rset_{[0,c_1]}\rightarrow\Rset_+$ \emph{belongs to class} $\cK$ if it is continuous, strictly increasing and $\varphi(0)=0$. A function $\varphi:\Rset_+\rightarrow\Rset_+$ \emph{belongs to class} $\cK_{\infty}$ if $\varphi\in\cK$ and $\lim_{s\rightarrow\infty}\varphi(s)=\infty$. A function $\beta:\Rset_+\times\Rset_+\rightarrow\Rset_+$ \emph{belongs to class} $\cK\cL$ if for each fixed
$k\in\Rset_+$, $\beta(\cdot,k)\in\cK$ and for each fixed $s\in\Rset_+$, $\beta(s,\cdot)$ is
decreasing and $\lim_{k\rightarrow\infty}\beta(s,k)=0$. Given a signal $v \in \mathbb{R}^{n_v}$, a starting time $k \geq 0$ and and ending instant $j\geq k+1$, we define
\begin{equation}\label{eq:single_Hankel}
        \Bar{\textbf{v}}(k,j) = \col{(v(k),\dots,v(k+j-1))}.
\end{equation}
\section{Preliminaries \& problem statement}\label{sec:preliminaries}

We consider nonlinear MIMO systems with inputs $u\in\mathbb{R}^m$ and measured outputs $y\in\mathbb{R}^p$, whose dynamics can be represented with an \emph{unknown} discrete--time state--space model:
\begin{equation}\label{eqn:state_space}
    \begin{aligned}
        & x(k+1)= f(x(k),u(k)), \quad k\in\mathbb{N}, \\
        &y(k)= h(x(k)),
    \end{aligned}
\end{equation}
$x\in\mathbb{X}$ is an unmeasured state, and the functions $f:\mathbb{R}^{n} \times \mathbb{R}^{m} \rightarrow \mathbb{R}^{n}$ and $h:\mathbb{R}^{n} \rightarrow \mathbb{R}^{p}$ are assumed to be \emph{unknown}. We further assume that the following properties hold.
\begin{assumption}\label{assumption:contobs}
    The system~\eqref{eqn:state_space} is controllable and observable.
\end{assumption}
\begin{assumption}\label{assumption:origin}
    The origin is a stabilizable equilibrium point for system~\eqref{eqn:state_space}.
\end{assumption}
Consider next a closed-loop counterpart of system~\eqref{eqn:state_space}, assuming that the input is a perturbed state feedback law $u(k) = \phi(x(k),e(k))$, i.e.,
\begin{equation}\label{eqn:state_space_preturbed}
    \begin{aligned}
        &x(k+1) = f(x(k),\phi(x(k),e(k))), \quad k\in\mathbb{N}, \\
        &y(k) = h(x(k)),
    \end{aligned}
\end{equation}
where $e:\Nset\rightarrow\Rset^{n_e}$ is an unknown \emph{inner} perturbation trajectory. We can define input-to-state stability for the closed-loop system in \eqref{eqn:state_space_preturbed} as follows.
\begin{definition}\label{def:ISS}
    We call the system~\eqref{eqn:state_space_preturbed} input-to-state stable in $\Xset$ for perturbations in $\Eset$, or shortly $ISS(\Xset,\Eset)$, if there exists a $\cK\cL$-function $\beta$ and a $\cK$-function $\gamma$ such that, for each $x(0)\in\mathbb{X}$ and all $\textbf{e} = \{e(l)\}_{l\in\Zset_+}$, it holds that the corresponding state trajectory of \eqref{eqn:state_space_preturbed} satisfies 
    \[\|x(k)\|\leq \beta(\|x(0)\|,k) + \gamma(\|\textbf{e}_{[0,k]}\|), \quad \forall k \in \Zset_{\geq1}. \]
\end{definition}\medskip
In this work, we exploit the fact that we can approximate a nonlinear system of the form \eqref{eqn:state_space} by a higher dimensional linear model utilizing the Koopman framework \cite{koopman1931hamiltonian}, where the state $x(k)$ of the original system \eqref{eqn:state_space} is lifted to a higher dimensional space via a set of observables, i.e.,
\begin{equation*}
    z(k) := \varphi(x(k)) := \col(\varphi_1(x(k),\dots,\varphi_L(x(k)),
\end{equation*}
and $z\in\mathbb{R}^L$ with $L>n$. A a commonly followed approach is to use an approximate Koopman model for MPC \cite{korda2018linear} which is the linear-in-control input embedding, i.e.,
\begin{equation}\label{eqn:koopman}
    \begin{aligned}
        & z(k+1) = A z(k) + B u(k), \quad k\in\mathbb{N}, \\
        &z(0) = \varphi(\textbf{x}_\tini(k)),
    \end{aligned}
\end{equation}
where we use an input-output data embedding of the state as in \cite{korda2020koopman}. This model is preferred even though it is incomplete, because it leads to a QP problem to be solved online. 
\begin{remark}In \cite{iacob2024koopman} it was shown that for general discrete-time nonlinear systems of the form \eqref{eqn:state_space} in combination with a finite set of continuously differentiable observable functions $\varphi\in\Xset\rightarrow\Rset^L$ where $\Xset$ is convex such that $\varphi(f(\cdot,0))\in\text{span}\{\varphi\}$, then there exists an exact finite dimensional lifting
\begin{equation}
\label{eq:Cristi}
\begin{split}
    &\varphi(x(k+1)) = \mathcal{A} \varphi(x(k)) + \mathcal{B}(x(k),u(k))u(k).
\end{split}
\end{equation}
For alternative complete nonlinear Koopman embedding methods we refer to \cite{goswami2017global,proctor2018generalizing}. It would be of interest to research further usage of such embedding methods in Koopman MPC. 
\end{remark}

\begin{remark}\label{remark:linearity}
Even if \cite{iacob2024koopman} proves that linear-in-control input Koopman models as \eqref{eqn:koopman} are incomplete, hence leading to considerable prediction errors, for relatively short prediction horizons $N$ the system remains relatively close to the initial conditions $z(0)$.In such a case it can be assumed that the transient response dominates the forced response. This further supports the validity of using \eqref{eqn:koopman} for predictive control strategies.
\end{remark}

Above in \eqref{eqn:koopman}, $\textbf{x}_\tini(k):=\col(\textbf{u}_\tini(k),\textbf{y}_\tini(k))$ is a collections of $T_\tini$ past inputs and outputs defined as 
\begin{equation*}
    \begin{split}
        \textbf{u}_\tini(k) &:= \col(u(k-T_\tini+1),\dots,u(k-1)) \in\mathbb{R}^{T_\tini m}, \\
        \textbf{y}_\tini(k) &:= \col(y(k-T_\tini+1),\dots,y(k))\in\mathbb{R}^{T_\tini p},        
    \end{split}
\end{equation*}
while the observables satisfy the following assumption.
\begin{assumption}\label{assumption:observables}
   The functions $\varphi(\textbf{x}_{ini}(k))$ parameterizing the vectors of observables are continuous and are such that $\varphi(0) = 0$.
\end{assumption}
The Koopman model \eqref{eqn:koopman} can can be used to define $N$-step ahead prediction matrices: 
\begin{equation}\label{eqn:multi-step}\Psi(A) :=
    \begin{bsmallmatrix}
        A \\
        A^2 \\
        \vdots \\
        A^N
    \end{bsmallmatrix}, \Gamma(A,B) :=
     \begin{bsmallmatrix}
        B & 0 & \dots & 0 \\
        AB & B & \dots & 0 \\
        \vdots & \vdots & \ddots & \vdots \\
        A^{N-1}B & A^{N-2}B & \dots & B \\
    \end{bsmallmatrix},
\end{equation}
ultimately allowing one to follow the \emph{indirect} data-driven predictive control, see e.g., \cite{masti2020learning}, where typically multi-step predictors of the NARX type \cite{billings2013nonlinear} are identified from the input-output data generated by system \eqref{eqn:state_space} and used to predict $\textbf{y}_{[1,N]}(k)$ and $\textbf{z}_{[1,N]}(k)$  form past inputs and outputs $\textbf{u}_\tini(k)$, $\textbf{y}_\tini(k)$ and $\textbf{u}_{[0,N-1]}(k)$, with, 
\begin{equation*}
    \begin{aligned}
        &\textbf{u}_{[0,N-1]}(k) := \col(u_{0|k},\dots,u_{N-1|k}), \\
        &\textbf{y}_{[1,N]}(k) := \col(y_{1|k},\dots,y_{N|k}), \\
        &\textbf{z}_{[1,N]}(k) := \col(z_{1|k},\dots,z_{N|k}). 
    \end{aligned}
\end{equation*}

\subsection{Problem Statement}
Assume that we are given a set of noiseless input-output data $\{y(i),u(i)\}_{i\in[0,s]}$ gathered from the unknown system \eqref{eqn:state_space}, where $s\in\Nset$ denotes the number of samples. In this context, our goal is the following. 
\begin{problem}\label{Problem:1}
Given a finite set of noiseless input-output data $\{y(i),u(i)\}_{i\in[0,s]}$ 
the objective is to utilize them \emph{exclusively} to construct an optimal linear-in-control input multi-step prediction model of the form:
\begin{equation}\label{eqn:KoopmanMultistep}
\textbf{z}_{[1,N]}(k) = \Psi z_{0|k} + \Gamma \textbf{u}_{[0,N-1]}(k),
\end{equation}
minimizing the prediction error 
\begin{equation}\label{eqn:prediction_error}
e(k) := z_{1|k-1} - \varphi(\bx_\tini(k)),\quad \forall k\in\Nset.
\end{equation}\hfill $\square$
\end{problem}
We approach this problem by developing a multi-step predictor that minimizes the prediction error over the entire prediction horizon, to ensure prediction errors do not propagate. Due to the availability of only input and output data, we adopt a two-step approach to learn a state space model in the Koopman lifted space, elaborated upon in Section~\ref{sec:koopmanModelIdentification}.

Since the linear-in-control input Koopman form is incomplete, using it for control purposes further leads to the problem formalized in the following statement. 
\begin{problem}\label{problem:2}
    Given a nonlinear system of the form \eqref{eqn:state_space} and a non-exact multi-step Koopman predictor \eqref{eqn:KoopmanMultistep} obtained by solving Problem~\ref{Problem:1}, develop a data-driven predictive control scheme with recursive feasibility and robust stability guarantees.  \hfill $\square$
\end{problem}

We approach this problem using an interpolated initial condition \cite{kohler2022recursively} in the data-driven predictive control problem, as elaborated in Section~\ref{sec:koopmanModelIdentification}. This approach allows us to guarantee recursive feasibility, but it makes guaranteeing stability more challenging. To circumvent this issue,  we develop robust stability (ISS) conditions (see Section~\ref{sec:RobustStability}).

\section{Multi-step learning of the\\ Koopman observables and prediction matrices}\label{sec:koopmanModelIdentification}
Toward framing our approach to tackle Problem~\ref{Problem:1}, let us initially define the following data Hankel matrices:
\begin{equation}\label{eqn:hankel}
    \begin{split}
        \textbf{U}_p &:= \begin{bmatrix} \Bar{\textbf{u}}(0,T_\tini-1), \dots, \Bar{\textbf{u}}(T,T_\tini-1)\end{bmatrix}, \\
        \textbf{Y}_p &:= \begin{bmatrix} \Bar{\textbf{y}}(0,T_\tini), \dots, \Bar{\textbf{y}}(T,T_\tini)\end{bmatrix}, \\
         \textbf{Z}_p &:= \begin{bmatrix} \Bar{\textbf{z}}(0,1), \dots, \Bar{\textbf{z}}(T,1)\end{bmatrix}, \\
        \textbf{U}_f &:= \begin{bmatrix} \Bar{\textbf{u}}(T_\tini-1,N), \dots, \Bar{\textbf{u}}(T_\tini-1+T ,N)\end{bmatrix}, \\
        \textbf{Y}_f &:= \begin{bmatrix} \Bar{\textbf{y}}(T_\tini,N), \dots, \Bar{\textbf{y}}(T_\tini+T,N)\end{bmatrix}, \\
        \textbf{Z}_f &:= \begin{bmatrix} \Bar{\textbf{z}}(1,N), \dots, \Bar{\textbf{z}}(T+1,N)\end{bmatrix}, \\
    \end{split}
\end{equation}
where $T\geq(m+p)T_\tini+mN$ is the number of their columns. 

Since it is not possible to capture the global dynamics of nonlinear systems with multiple equilibria using a single linear operator \cite{brunton2021modern,brunton2016koopman}, we tackle Problem~\ref{Problem:1} under Assumption~\ref{assumption:contobs} and by leveraging Assumption~\ref{assumption:origin} to fix the origin as the equilibrium point of system~\eqref{eqn:state_space}. 

To identify directly a multi-step output predictor from the available input-output data,  we define 
\begin{equation}
        \Tilde{\Psi} := \Tilde{C} \Psi, \quad \quad \Tilde{\Gamma} := \Tilde{C} \Gamma, \\
\end{equation}
with the  block diagonal matrix $\Tilde{C} := \text{blkdiag}(N,C)$. Accordingly, our goal of minimizing the prediction error translates into the following loss function: 
\[
J(\textbf{Y}_f, \hat{\textbf{Y}}_{f}) := \|\textbf{Y}_f- \hat{\textbf{Y}}_{f}\|_2^2,
\]
where $\hat{\textbf{Y}}_{f}$ is the Hankel matrix of future outputs generated by the linear-in-control multi-step Koopman model predictor. Hence, the multi-step learning problem we aim to solve can be formalized as:
\begin{equation}
\label{eq:fit_basis}
\begin{aligned}
&\min_{\{\varphi,\Tilde{\Psi} ,\Tilde{\Gamma}\}} J(\textbf{Y}_f, \hat{\textbf{Y}}_{f}) \\ 
&\text{subject to: }  \\
&\qquad \hat{\textbf{Y}}_{f} = \Tilde{\Psi}\varphi\bigg(\begin{bmatrix}
   \textbf{U}_p \\
   \textbf{Y}_p
\end{bmatrix}\bigg) + \Tilde{\Gamma} \textbf{U}_f.
\end{aligned}
\end{equation}
The observables $\varphi_i$ can be chosen as some basis functions (e.g., radial basis functions) or neural networks \cite{lian2021koopman} and, depending on this structural choice, \eqref{eq:fit_basis} can be a nonlinear least squares problem. As an example, this is what happens when parametrizing the observable as radial basis function and learning their centers, or when the observables are parameterized as neural networks with nonlinear activation functions (such as $\tanh$ or sigmoid).  

Solving \eqref{eq:fit_basis} results in a multi-step predictor of the form
 \begin{align*}
     \textbf{y}_{[1,N]} &= \Tilde{\Psi}^*z(k) + \Tilde{\Gamma}^* \textbf{u}_{[0,N-1]}\\
     &=\Tilde{\Psi}^*\varphi^*(\textbf{x}_\tini(k)) + \Tilde{\Gamma}^* \textbf{u}_{[0,N-1]},
 \end{align*} 
 where we have introduced the definition of the state at time $k$ according to \eqref{eqn:koopman}. 
 
 After this first identification step, we propose to refine the estimates of $\Tilde{\Psi}$ and $ \Tilde{\Gamma}$ by exploiting the same arguments of linear subspace identification. Specifically, we feed the input-output data through the learned observables $\varphi^*$, construct the Koopman lifted state Hankel matrices $\textbf{Z}_p$ an $\textbf{Z}_f$ from \eqref{eqn:hankel} and, then, compute the least-squares estimates
 \begin{equation}\label{eqn:LS_solution}
     \textbf{Z}_{f} \begin{bmatrix}
     \textbf{Z}_p  \\
\textbf{U}_{f}
\end{bmatrix}^{\dag} = \begin{bmatrix}
    \Psi^{LS} & \Gamma^{LS}
\end{bmatrix}.
 \end{equation}
 Note that, this problem has a unique solution if the matrix $\begin{bmatrix}
\textbf{Z}_p^\top  & \textbf{U}_{f}^\top \end{bmatrix}^\top$ has full row rank. This can be achieved by choosing a persistently exciting input of sufficient order and choosing the experiment length $T$ sufficiently long. The resulting multi-step state predictor is given by
\begin{equation*}\label{eqn:koopman_pm}
    \textbf{z}_{[1,N]}(k) = \Psi^{LS} z_{0|k} + \Gamma^{LS}  \textbf{u}_{[0,N-1]}(k),
\end{equation*}
being a multi-step linear-in-control input Koopman prediction model for the system~\eqref{eqn:state_space}. From $\Psi^{LS}$ and $\Gamma^{LS}$, we can further extract  their first row blocks to construct the matrices $\Tilde{A}$ and $\Tilde{B}$ such that  
\begin{equation*}\label{eqn:koopman_pm}
    z_{1|k} = \Tilde{A} z_{0|k} + \Tilde{B} u_{0}(k),
\end{equation*}
which we use in constructing the terminal ingredients of the proposed predictive control scheme.


\section{Koopman data-driven predictive control with interpolated initial state}\label{sec:RobustStability}
As shown in \cite{iacob2024koopman}, linear-in-control input Koopman models frequently lead to plant-model mismatches. When employing the Koopman model in a predictive control scheme, this mismatch could cause a loss of recursive feasibility and stability guarantees even if a terminal set is used in the Koopman space. Inspired by \cite{kohler2022recursively}, we address this challenge by proposing the following Koopman data-driven predictive control formulation with interpolated initial state:
\begin{subequations}
\label{eq:3:1}
\begin{align}
&\min_{\Xi(k)}  \quad   l_N(z_{N|k}) + \sum_{i=0}^{N-1} l(z_{i|k},u_{i|k}) + l_r(z_{0|k})   \label{eq:3:1a} \\ 
&\text{subject to: } \nonumber  \\
&\qquad \bz_{[1,N]}(k) = \Psi^{LS} z_{0|k} + \Gamma^{LS}  \textbf{u}_{[0,N-1]}(k), \label{eq:3:1b}\\
&\qquad z_{0|k} = (1-\xi_k)\varphi(\bx_\tini(k)) + \xi_k z^*_{1|k-1}, \label{eq:3:1c}\\
&\qquad (\textbf{z}_{[1,N]}(k),\textbf{u}_{[0,N-1]}(k))  \in \Xset_z^N \times \Uset^N, \label{eq:3:1d}\\
&\qquad z_{N|k} \in \mathbb{X}_T\subseteq\Xset_z,\label{eq:3:1e}\\
&\qquad \xi_k \in [0,1].\label{eq:3:1f}
\end{align}
\end{subequations}
where $\Xi(k) :=\col(\bz_{[1,N]}(k),\textbf{u}_{[0,N-1]}(k),z_{0|k},\xi_k)$, $z^*_{1|k-1}$ is the one-step ahead optimal predicted state from the previous sampling instance, the terminal cost is $l_N(z_{N|k}):=z_{N|k}^\top Pz_{N|k}$, and 
the stage cost is $l(z_{i|k},u_{i|k}):=z_{i|k}^\top Qz_{i|k}+u_{i|k}^\top R u_{i|k}$. With respect to \cite{kohler2022recursively}, we propose the alternative regularization term 
\begin{align}
\label{eq:3:lxi}
l_r(z_{0|k}):=\lambda\|z_{0|k}-\varphi(\bx_\tini(k))\|_2^2, \quad \lambda>0,
\end{align}
which directly penalizes the deviation of the initial state in the Koopman space from the Koopman state obtained by lifting the most recent measured input-output data $\bx_\tini(k)$. This alternative regularization is instrumental in establishing input-to-state stability guarantees, see, e.g., \cite{JiangWang01, lazar2013}, with respect to the Koopman model error
\begin{equation}
\label{eq:3:eK}
e(k):=z_{1|k-1}^\ast-\varphi(\bx_\tini(k)),
\end{equation}
namely, the difference between the one-step ahead predicted Koopman state and the corresponding measured Koopman state obtained at time $k$. This prediction error is related to the inner perturbation trajectory in \eqref{eqn:state_space_preturbed} which will later be utilized for stability analysis. Note that in \eqref{eq:3:1} the Koopman predicted states are constrained to a compact set $\Xset_z$ within the Koopman space that contains the origin in its interior. Such a set could be constructed by computing the minimum and the maximum of $\varphi(\bx_\tini)$ over $\bx_\tini\in\Uset^{T_\tini}\times\Yset^{T_\tini}$. Moreover, it is worth remarking that our Assumption~\ref{assumption:observables} is consistent with the fact that we assume the origin is an equilibrium of the original nonlinear system. 
\subsection{Properties of proposed scheme}
To establish the properties of \eqref{eq:3:1}
we have to introduce a terminal cost and set in the Koopman space, which are outlined in the following.
\begin{assumption}\label{assm:TerminalCost_and_Set}
There exists a locally stabilizing Koopman state-feedback control law $u(z(k)):=Kz(k)$, a positive definite matrix $P$ and a compact set $\Xset_T$ with the origin in its interior such that:
\begin{subequations}
\label{eq:3:2}
\begin{align}
&(\Tilde{A}+\Tilde{B}K)^\top P(\Tilde{A}+\Tilde{B}K)-P+Q+K^\top R K \succeq 0\label{eq:3:2a}\\
&(\Tilde{A}+\Tilde{B}K)\Xset_T\subseteq\Xset_T,\quad K\Xset_T\subseteq\Uset,\quad \Xset_T\subseteq\Xset_z. \label{eq:3:2b}
\end{align}
\end{subequations}
\end{assumption}
With this assumption in place, we can formalize our first theoretical result, namely the recursive feasibility of \eqref{eq:3:1}.
\begin{theorem}\label{theorem:RecursiveFeasibility}
Suppose that Assumption~\ref{assm:TerminalCost_and_Set} holds. At any time $k\in\Nset$, given $\bx_\tini(k)$ and $z_{1|k-1}^\ast$, suppose that \eqref{eq:3:1} is feasible. Then the problem in \eqref{eq:3:1} is feasible at time $k+1$ for $\bx_\tini(k+1)$ and $z_{1|k}^\ast$. 
\end{theorem}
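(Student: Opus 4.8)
The plan is to use the standard receding-horizon construction, the only subtlety being the treatment of the interpolated initial state. Since \eqref{eq:3:1} is a quadratic program over a compact feasible set, feasibility at time $k$ yields a minimizer $\Xi^\ast(k)=\col(\bz^\ast_{[1,N]}(k),\bu^\ast_{[0,N-1]}(k),z^\ast_{0|k},\xi^\ast_k)$, whose first predicted state satisfies $z^\ast_{1|k}=\Tilde{A}z^\ast_{0|k}+\Tilde{B}u^\ast_{0|k}$ by \eqref{eq:3:1b}; this $z^\ast_{1|k}$ is the quantity handed to step $k+1$. The key move is to pick $\xi_{k+1}=1$ in the candidate $\Xi(k+1)$, so that \eqref{eq:3:1c} forces $z_{0|k+1}=z^\ast_{1|k}$: the candidate initial state then coincides \emph{exactly} with the one-step prediction made at time $k$, not with the freshly lifted measurement $\varphi(\bx_\tini(k+1))$. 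This is precisely what makes feasibility at $k+1$ immune to the (generally nonzero) Koopman prediction error $e(k+1)$ in \eqref{eq:3:eK}, and it reduces the rest of the argument to the nominal linear-MPC case.

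Concretely, I would complete the candidate by shifting the optimal input sequence and appending the terminal control law, i.e. $\bu_{[0,N-1]}(k+1):=\col(u^\ast_{1|k},\dots,u^\ast_{N-1|k},Kz^\ast_{N|k})$, and letting $\bz_{[1,N]}(k+1)$ be the trajectory that \eqref{eq:3:1b} assigns to $z_{0|k+1}=z^\ast_{1|k}$ and this input sequence. Using that the multi-step predictor $(\Psi^{LS},\Gamma^{LS})$ is the one generated by $\Tilde{A},\Tilde{B}$ as in \eqref{eqn:multi-step}, a short induction on $i$ gives $z_{i|k+1}=z^\ast_{i+1|k}$ for $i=1,\dots,N-1$ and $z_{N|k+1}=(\Tilde{A}+\Tilde{B}K)z^\ast_{N|k}$. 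Feasibility of $\Xi(k+1)$ then follows termwise: the shifted states $z^\ast_{2|k},\dots,z^\ast_{N|k}$ and inputs $u^\ast_{1|k},\dots,u^\ast_{N-1|k}$ already satisfy \eqref{eq:3:1d} by feasibility at $k$; since $z^\ast_{N|k}\in\Xset_T$, Assumption~\ref{assm:TerminalCost_and_Set}\eqref{eq:3:2b} gives $Kz^\ast_{N|k}\in\Uset$ and $z_{N|k+1}=(\Tilde{A}+\Tilde{B}K)z^\ast_{N|k}\in\Xset_T\subseteq\Xset_z$, which covers \eqref{eq:3:1d}--\eqref{eq:3:1e}; and $\xi_{k+1}=1\in[0,1]$ together with the construction settles \eqref{eq:3:1b}, \eqref{eq:3:1c} and \eqref{eq:3:1f}. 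Hence $\Xi(k+1)$ is feasible for \eqref{eq:3:1} at time $k+1$.

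The step I expect to be the main obstacle is the induction identifying the candidate state trajectory with the tail of the previous prediction followed by one step of the terminal closed-loop map $\Tilde{A}+\Tilde{B}K$. This identity is clean only if the learned multi-step matrices are \emph{causal and nested}, i.e. $\Psi^{LS}=\Psi(\Tilde{A})$ and $\Gamma^{LS}=\Gamma(\Tilde{A},\Tilde{B})$ with $\Tilde{A},\Tilde{B}$ the first blocks extracted after \eqref{eqn:LS_solution}; if the least-squares fit is carried out without enforcing this block-triangular Toeplitz structure, one would need either to append those linear structural constraints to \eqref{eqn:LS_solution} (which leaves its solvability under the stated full-rank condition intact) or to absorb the mismatch between the ``iterated'' and the ``direct'' predictor into a small tightening of $\Xset_T$. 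Everything else is bookkeeping; in particular the interpolation weight $\xi_k$ and the regularizer $l_r$ in \eqref{eq:3:lxi} play no role here, as they affect only the cost \eqref{eq:3:1a} and not the constraint set -- which is exactly why recursive feasibility survives the Koopman plant--model mismatch.
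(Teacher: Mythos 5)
Your proposal is correct and follows essentially the same route as the paper's proof: choose $\xi_{k+1}=1$ so that $z_{0|k+1}=z^\ast_{1|k}$, append $K z^\ast_{N|k}$ to the shifted optimal input sequence, and use the invariance and admissibility conditions of Assumption~\ref{assm:TerminalCost_and_Set} to verify constraints \eqref{eq:3:1b}--\eqref{eq:3:1f}. The structural caveat you flag --- that the tail identity and the terminal step $(\Tilde{A}+\Tilde{B}K)z^\ast_{N|k}$ require $\Psi^{LS}=\Psi(\Tilde{A})$ and $\Gamma^{LS}=\Gamma(\Tilde{A},\Tilde{B})$ --- is implicitly assumed in the paper's own proof as well (its computation of $\tilde z_{N|k+1}$ uses exactly this nested structure), so your remark is a fair refinement rather than a deviation.
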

\begin{proof}
Consider the optimal predicted input sequence $\bu_{[0,N-1]}^*(k)$ at $k$, i.e.,
\[\bu_{[0,N-1]}^*(k) = \{u^*_{0|k},u^*_{1|k},\dots,u^*_{N-2|k}, u^*_{N-1|k}\}.\]
Then, at time $k+1$, choose $\tilde\xi_{k+1}\!=\!1$, which yields $\tilde z_{0|k+1}=z_{1|k}^\ast$ and construct the sub-optimal input sequence:
\[\tilde\bu_{[0,N-1]}(k+1) = \{u^*_{1|k},u^*_{2|k},\dots,u^*_{N-1|k},\tilde u_{N-1|k+1}\},\]
where $\tilde u_{N-1|k+1}:=Kx^*_{N|k}$. This results in a sub-optimal, but feasible initial condition $\tilde z_{0|k+1}$, as the shifted sub-optimal sequence $\tilde\bu_{[0,N-1]}(k+1)$ satisfies all the constraints in \eqref{eq:3:1}, due to Assumption~\ref{assm:TerminalCost_and_Set} and the fact that
\[
\begin{split}
\tilde z_{N|k+1}&=\Tilde{A}^N z_{1|k}^\ast + \begin{bmatrix}\Tilde{A}^{N-1}\Tilde{B} & \ldots & \Tilde{B}\end{bmatrix} \tilde\bu_{[0,N-1]}(\tilde z_{0|k+1})\\
& = \Tilde{A}(\Tilde{A}^{N-1} z_{1|k}^\ast+\Tilde{A}^{N-2}\Tilde{B} u_{1|k}^\ast+\ldots)+ 
\Tilde{B} \tilde u_{N-1|k+1}\\
&=\tilde Az_{N|k}^\ast+ \Tilde{B} \tilde u_{N-1|k+1}= (\Tilde{A}+\Tilde{B}K)z_{N|k}^\ast\in\Xset_T, 
\end{split}\]
which completes the proof.
\end{proof}
\begin{remark}\label{remark:RF_noise}
Theorem~\ref{theorem:RecursiveFeasibility} also holds in the presence of noise under suitable assumptions. If Problem~\ref{eq:3:1} is feasible at time $k$ it is also feasible at time $k+1$ since $\xi=1$ always results in a feasible problem. 
\end{remark}

Next, we prove that the nonlinear system \eqref{eqn:state_space} controlled in closed-loop with the Koopman DPC control law obtained by solving \eqref{eq:3:1} is ISS with respect to the Koopman model one-step ahead prediction error $e(k)$. To this end, let $\Xset_f(N)$ denote the set of feasible initial states in the Koopman space, i.e., $z_{0|k}$, which is positively invariant by Theorem~\ref{theorem:RecursiveFeasibility} for the closed-loop system \eqref{eqn:state_space}-\eqref{eq:3:1}. Moreover, we define the optimal value function at time $k$ corresponding to \eqref{eq:3:1}, i.e., 
\begin{align*}
V(z_{0|k}^\ast)&:=J(\bz_{[1,N]}^\ast(k),\bu_{[0,N-1]}^\ast(k),z_{0|k}^\ast,\xi_k^\ast)\\
&=l_N(z_{N|k}^\ast) + \sum_{i=0}^{N-1} l(z_{i|k}^\ast,u_{i|k}^\ast) + l_r(z_{0|k}^\ast).
\end{align*} 
With that, we can establish the following instrumental result along with the usual reasoning used to establish that the MPC value function is decreasing.
\begin{lemma}
\label{lem:MPC}
Suppose that Assumption~\ref{assm:TerminalCost_and_Set} holds. Then for all $\varphi(\bx_\tini(0))\in\Xset_z$ and $z_{1|-1}^\ast\in\Xset_z$ such that the corresponding $z_{0|0}^\ast\in\Xset_f(N)$, along the trajectories of the closed-loop system \eqref{eqn:state_space}-\eqref{eq:3:1} it holds that:
\begin{equation}
\label{eq:V:z}
V(z_{0|k+1}^\ast)-V(z_{0|k}^\ast)\leq\! -\alpha_Q(\|z_{0|k}^\ast\|)+\sigma_r(\|e(k+1)\|),
\end{equation}
for all $k\in\Nset$ and for $\alpha_Q(s):=\lambda_\text{min}(Q)s^2\in\cK_\infty$ and $\sigma_r(s):=\lambda s^2\in\cK_\infty$.
\end{lemma}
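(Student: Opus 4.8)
The plan is to follow the standard MPC value-function descent argument, but carefully tracking how the interpolated initial state and the novel regularization term $l_r$ interact. First I would construct, at time $k+1$, the feasible (sub-optimal) candidate built in the proof of Theorem~\ref{theorem:RecursiveFeasibility}: take $\tilde\xi_{k+1}=1$, so that $\tilde z_{0|k+1}=z_{1|k}^\ast$, shift the optimal input sequence and append $\tilde u_{N-1|k+1}=Kz_{N|k}^\ast$. Evaluating $J$ at this candidate, the usual telescoping cancellation of stage costs leaves the terminal terms: by \eqref{eq:3:2a} one has $l_N((\tilde A+\tilde B K)z_{N|k}^\ast)-l_N(z_{N|k}^\ast)+l(z_{N|k}^\ast,Kz_{N|k}^\ast)\le 0$, so the cost of the shifted sequence is at most $V(z_{0|k}^\ast)-l(z_{0|k}^\ast,u_{0|k}^\ast)-l_r(z_{0|k}^\ast)+l_r(\tilde z_{0|k+1})$.

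Next I would handle the regularization terms, which is where this proof departs from the textbook one. The key observation is that since $\xi_k^\ast\in[0,1]$ and $z_{0|k}^\ast=(1-\xi_k^\ast)\varphi(\bx_\tini(k))+\xi_k^\ast z_{1|k-1}^\ast$, the penalized quantity satisfies $z_{0|k}^\ast-\varphi(\bx_\tini(k))=\xi_k^\ast(z_{1|k-1}^\ast-\varphi(\bx_\tini(k)))=\xi_k^\ast e(k)$, hence $l_r(z_{0|k}^\ast)=\lambda(\xi_k^\ast)^2\|e(k)\|^2\ge 0$, which can simply be dropped. For the new term at time $k+1$, since $\tilde\xi_{k+1}=1$ gives $\tilde z_{0|k+1}=z_{1|k}^\ast$, we get $l_r(\tilde z_{0|k+1})=\lambda\|z_{1|k}^\ast-\varphi(\bx_\tini(k+1))\|^2=\lambda\|e(k+1)\|^2=\sigma_r(\|e(k+1)\|)$, using definition \eqref{eq:3:eK}. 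Combining, $V(z_{0|k+1}^\ast)\le J(\text{candidate})\le V(z_{0|k}^\ast)-l(z_{0|k}^\ast,u_{0|k}^\ast)+\sigma_r(\|e(k+1)\|)$, and finally bounding the stage cost from below by $l(z_{0|k}^\ast,u_{0|k}^\ast)\ge z_{0|k}^{\ast\top}Qz_{0|k}^\ast\ge\lambda_{\min}(Q)\|z_{0|k}^\ast\|^2=\alpha_Q(\|z_{0|k}^\ast\|)$ yields \eqref{eq:V:z}. Positive invariance of $\Xset_f(N)$, needed to guarantee that $z_{0|k+1}^\ast$ is well defined, comes from Theorem~\ref{theorem:RecursiveFeasibility}.

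The main obstacle — or rather the subtle point that must not be glossed over — is the use of sub-optimality: $V(z_{0|k+1}^\ast)$ is the value of the \emph{optimal} solution at $k+1$, which is no larger than the cost of the shifted candidate, so the inequality goes the right way. One must also be careful that the shifted candidate is genuinely feasible for the problem at $k+1$ (constraints \eqref{eq:3:1b}--\eqref{eq:3:1f}), which is exactly what the proof of Theorem~\ref{theorem:RecursiveFeasibility} establishes, including $\tilde z_{N|k+1}=(\tilde A+\tilde BK)z_{N|k}^\ast\in\Xset_T$ via \eqref{eq:3:2b}. A minor point is that the candidate's predicted state trajectory $\bz_{[1,N]}(k+1)$ must be the one generated by $\Psi^{LS},\Gamma^{LS}$ from $\tilde z_{0|k+1}$ and the shifted inputs; since $\tilde A,\tilde B$ are the first block rows of $\Psi^{LS},\Gamma^{LS}$ the shift identities used in Theorem~\ref{theorem:RecursiveFeasibility} apply consistently, and the stage-cost telescoping is valid. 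No growth-type upper bound on $V$ is required for this lemma itself — that, together with \eqref{eq:V:z}, would feed into the subsequent ISS theorem.
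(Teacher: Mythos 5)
Your proposal is correct and follows essentially the same argument as the paper: the shifted candidate with $\tilde\xi_{k+1}=1$ (so $\tilde z_{0|k+1}=z_{1|k}^\ast$) and appended terminal input $Kz_{N|k}^\ast$, sub-optimality of this candidate relative to $V(z_{0|k+1}^\ast)$, the terminal-cost condition \eqref{eq:3:2a} (read with the intended decrease direction, as the paper's own proof does) to absorb the terminal terms, dropping the nonnegative $l_r(z_{0|k}^\ast)$ and $u_{0|k}^{\ast\top}Ru_{0|k}^\ast$, and identifying $l_r(\tilde z_{0|k+1})=\lambda\|e(k+1)\|^2=\sigma_r(\|e(k+1)\|)$. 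Your additional remarks on feasibility of the candidate via Theorem~\ref{theorem:RecursiveFeasibility} and on the consistency of the shifted trajectory with $\Psi^{LS},\Gamma^{LS}$ match the paper's (implicit) treatment.
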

\begin{proof}
Since  $z_{0|0}^\ast\in\Xset_f(N)$, by Theorem~\ref{theorem:RecursiveFeasibility}, \eqref{eq:3:1} remains feasible for all $k\geq 1$. At any time $k+1\in\Nset$ consider the sub-optimal initial state $\tilde z_{0|k+1}=z_{1|k}^\ast$ obtained for $\tilde \xi_{k+1}=1$  and the corresponding sub-optimal state and input sequences $\tilde\bz_{[1,N]}(k+1), \tilde\bu_{[0,N-1]}(k+1)$. Then, by the optimality of $V(z_{0|k+1}^\ast)$ it holds that
\begin{align*}
&V(z_{0|k+1}^\ast)-V(z_{0|k}^\ast)\\
&\leq J(\tilde\bz_{[1,N]}(k+1), \tilde\bu_{[0,N-1]}(k+1),z_{1|k}^\ast,1)-V(z_{0|k}^\ast)\\
&=z_{N|k}^{\ast\top}((\Tilde{A}+\Tilde{B}K)^\top P(\Tilde{A}+\Tilde{B}K)-P+Q+K^\top R K)z_{N|k}^\ast\\
&-l(z_{0|k}^\ast,u_{0|k}^\ast)+l_r(\tilde z_{0|k+1})-l_r(z_{0|k}^\ast)\\
&\leq -z_{0|k}^{\ast\top}Q z_{0|k}^\ast + \lambda \|z_{1|k}^\ast-\varphi(\bx_\tini(k+1))\|^2,
\end{align*}
which yields the inequality in \eqref{eq:V:z}.
\end{proof}

Typically, robust stability results for MPC, see, e.g., \cite{lazar2013}, use continuity of the value function $V$ evaluated at the true nonlinear system state $\varphi(\bx_\tini(k))$ to obtain input-to-state stability results. However, in the case of \eqref{eq:3:1} this is not possible because $\bx_\tini(k)$ may not belong to the feasible set $\Xset_f(N)$. Hence, instead, we firstly establish ISS \cite{JiangWang01} for the interpolated state trajectory $z_{0|k}^\ast$ and then we use the properties of comparison functions \cite{compendium2014} to establish ISS of the lifted Koopman state based on the true nonlinear system measurements, i.e., $\varphi(\bx_\tini(k))$.

\begin{theorem}
Assume that there exists $\alpha_2\in\cK_\infty$ such that $V(z)\leq \alpha_2(\|z\|)$ for all $z\in\Xset_f(N)$. Suppose that Assumption~\ref{assm:TerminalCost_and_Set} holds. Then for all $\varphi(\bx_\tini(0))\in\Xset_z$ and $z_{1|-1}^\ast\in\Xset_z$ such that the corresponding $z_{0|0}^\ast\in\Xset_f(N)$, along the trajectories of the closed-loop system \eqref{eqn:state_space}-\eqref{eq:3:1} it holds that
\begin{equation}
\label{eq:ISS}
\|\varphi(\bx_\tini(k))\|\leq \beta(\|\varphi(\bx_\tini(0))\|,k)+\gamma(\|\be_{[0,k]}\|),\quad\forall k\geq 1,
\end{equation}
for some $\beta\in\cK\cL$ and $\gamma\in\cK$.
\end{theorem}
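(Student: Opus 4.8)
The plan is to use the optimal value function $V$ as an ISS-Lyapunov function for the \emph{interpolated} initial-state sequence $\{z_{0|k}^\ast\}_{k\in\Nset}$, and then to transfer the resulting estimate to $\varphi(\bx_\tini(k))$ through the algebraic relation linking the two. First I would record the sandwich bounds on $V$. Since $P\succ 0$, $\lambda>0$, and the stage cost contains $z_{0|k}^{\ast\top}Qz_{0|k}^\ast$ while $l_N$ and $l_r$ are non-negative, one gets the lower bound $V(z_{0|k}^\ast)\geq\lambda_{\min}(Q)\|z_{0|k}^\ast\|^2=\alpha_Q(\|z_{0|k}^\ast\|)$; the hypothesis provides the upper bound $V(z)\leq\alpha_2(\|z\|)$ on $\Xset_f(N)$. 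Combined with the descent inequality \eqref{eq:V:z} of Lemma~\ref{lem:MPC} and the positive invariance of $\Xset_f(N)$ from Theorem~\ref{theorem:RecursiveFeasibility}, this makes $V$ an ISS-Lyapunov function along the closed-loop trajectory, with lower-bound class-$\cK_\infty$ function coinciding with the decrease-rate function $\alpha_Q$.

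Next I would run the standard comparison-function argument for discrete-time ISS-Lyapunov functions (see \cite{JiangWang01,lazar2013}): because the lower bound $\alpha_Q$ equals the dissipation rate, $\alpha_Q\circ\alpha_2^{-1}$ is dominated by the identity, so iterating \eqref{eq:V:z} together with $\alpha_Q(\|\cdot\|)\le V\le\alpha_2(\|\cdot\|)$ yields $\beta_0\in\cK\cL$ and $\gamma_0\in\cK$ with
\[
\|z_{0|k}^\ast\|\le\beta_0(\|z_{0|0}^\ast\|,k)+\gamma_0(\|\be_{[0,k]}\|),\qquad\forall k\ge 1 .
\]

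It then remains to pass from $z_{0|k}^\ast$ to $\varphi(\bx_\tini(k))$. Substituting $z_{1|k-1}^\ast=\varphi(\bx_\tini(k))+e(k)$ from \eqref{eq:3:eK} into the interpolation constraint \eqref{eq:3:1c} gives the exact identity $z_{0|k}^\ast=\varphi(\bx_\tini(k))+\xi_k^\ast e(k)$; since $\xi_k^\ast\in[0,1]$ and $\|e(l)\|\le\|\be_{[0,k]}\|$ for $l\le k$, this yields $\|\varphi(\bx_\tini(k))\|\le\|z_{0|k}^\ast\|+\|\be_{[0,k]}\|$ for all $k$, and in particular $\|z_{0|0}^\ast\|\le\|\varphi(\bx_\tini(0))\|+\|\be_{[0,k]}\|$. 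Inserting these into the previous display and using the weak triangle inequality for class-$\cK$/$\cK\cL$ functions (e.g.\ $\beta_0(a+b,k)\le\beta_0(2a,k)+\beta_0(2b,0)$) gives \eqref{eq:ISS} with $\beta(s,k):=\beta_0(2s,k)\in\cK\cL$ and $\gamma(s):=\beta_0(2s,0)+\gamma_0(s)+s\in\cK$.

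The step I expect to be the main obstacle is making the ISS property of $\{z_{0|k}^\ast\}$ rigorous: this sequence is not the solution of an autonomous perturbed difference equation (it is coupled to the on-line optimizer and to $\varphi(\bx_\tini(\cdot))$), so the ISS estimate must be extracted directly from the Lyapunov inequalities along the closed-loop trajectory rather than from a black-box theorem, and one must ensure that the comparison functions $\beta_0,\gamma_0$ can be chosen uniformly over $\Xset_f(N)$ and independently of the perturbation realisation. Once this is secured, the final transfer is purely algebraic, relying only on the boundedness $\xi_k^\ast\in[0,1]$ of the interpolation weight.
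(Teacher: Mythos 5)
Your proposal is correct and follows essentially the same route as the paper's proof: establish ISS of the interpolated sequence $z_{0|k}^\ast$ via the ISS-Lyapunov argument based on Lemma~\ref{lem:MPC} and the bounds $\alpha_Q(\|z\|)\le V(z)\le\alpha_2(\|z\|)$, then transfer the estimate to $\varphi(\bx_\tini(k))$ using the identity $z_{0|k}^\ast-\varphi(\bx_\tini(k))=\xi_k^\ast e(k)$, $\xi_k^\ast\in[0,1]$, and the weak triangle inequality for $\cK\cL$ functions, arriving at the same comparison functions $\beta(s,k)=\beta_z(2s,k)$ and $\gamma(s)=\gamma_z(s)+s+\beta_z(2s,0)$.
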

\begin{proof}
First we show that for all $z_{0|0}^\ast\in\Xset_f(N)$ there exist $\beta_z\in\cK\cL$ and $\gamma_z\in\cK$ such that
\begin{equation}
\label{eq:ISS:z}
\|z_{0|k}^\ast\|\leq \beta_z(\|z_{0|0}^\ast\|,k)+\gamma_z(\|\be_{[0,k]}\|),\quad\forall k\geq 1.
\end{equation}
Indeed, by the hypothesis, inequality \eqref{eq:V:z} and the fact that $V(z_{0|k}^\ast)\geq \alpha_Q(\|z_{0|k}^\ast\|)$ for all $k\in\Nset$, by standard ISS Lyapunov function arguments \cite{JiangWang01} we obtain \eqref{eq:ISS:z}.  Next notice that by \eqref{eq:3:1c} it holds that
\begin{equation}
\label{eq:ISS:nom}
z_{0|k}^\ast-\varphi(\bx_\tini(k))=\xi_k^\ast(z_{1|k-1}^\ast-\varphi(\bx_\tini(k)))=\xi_k^\ast e(k).
\end{equation}
Hence, it holds that
\begin{align*}
\|\varphi(\bx_\tini(k))\|&=\|\varphi(\bx_\tini(k))-z_{0|k}^\ast+z_{0|k}^\ast\|\\
&\leq\|z_{0|k}^\ast\|+\|\varphi(\bx_\tini(k))-z_{0|k}^\ast\|\\
&=\|z_{0|k}^\ast\|+\|z_{0|k}^\ast-\varphi(\bx_\tini(k))\|\\&\leq\|z_{0|k}^\ast\|+\|e(k)\|\\
&\leq \beta_z(\|z_{0|0}^\ast\|,k)+\gamma_z(\|\be_{[0,k]}\|)+\|e(k)\|.
\end{align*}
Then due to $\beta_z\in\cK\cL$ it the follows that: 
\begin{align*}
\beta_z(\|z_{0|0}^\ast\|,k)&=\beta_z(\|z_{0|0}^\ast+\varphi(\bx_\tini(0))-\varphi(\bx_\tini(0))\|,k)\\
&= \beta_z(\|\varphi(\bx_\tini(0))+\xi_0^\ast e(0)\|,k)\\
&\leq \beta_z(\|\varphi(\bx_\tini(0))\|+\| e(0)\|,k)\\
&\leq \beta_z(2\|\varphi(\bx_\tini(0))\|,k)+\beta_z(2\| e(0)\|,k)\\
&\leq \beta_z(2\|\varphi(\bx_\tini(0))\|,k)+\beta_z(2\| e(0)\|,0).
\end{align*}
Above we used the fact that $\xi_k^\ast\in [0,1]$, the triangle inequality and standard properties of $\cK$ and $\cL$ functions. By combining the previous inequalities and using $\|e(i)\|\leq\|\be_{[0,k]}\|$ for any $i\in [0,k]$, we obtain \eqref{eq:ISS} with $\beta(s,k):=\beta_z(2s,k)\in\cK\cL$ and $\gamma(s):=\gamma_z(s)+s+\beta_z(2s,0)\in\cK$, which completes the proof.
\end{proof}
\begin{remark}\label{remark:ISS_noise}
The ISS bound \eqref{eq:ISS} holds in the presence of noisy data under certain assumptions, since ISS does not assume that the prediction error is bounded. 
\end{remark}

According to standard ISS arguments \cite{JiangWang01}, the ISS bound \eqref{eq:ISS} implies that when the Koopman model error tends to zero, the lifted measured state of the true nonlinear system converges to zero. Due to Assumption~\ref{assumption:observables} we further obtain that $\|\bx_\tini(k)\|$ converges to zero and, under detectability, that the true state of the nonlinear system converges to zero. 

Furthermore, the ISS bound on trajectories \eqref{eq:ISS} implies the finite asymptotic gain property \cite{JiangWang01}, which guarantees that all trajectories of the nonlinear system \eqref{eq:3:1} in closed-loop with the Koopman DPC control law obtained by solving \eqref{eq:3:1} are ultimately bounded in a set proportional with the Koopman model error. 
\begin{remark}[On the choice of $l_{r}(z_{0|k})$]
Given the equality in \eqref{eq:ISS:nom}, the regularization loss in \eqref{eq:3:lxi} can be replaced with 
\begin{align}
\label{eq:3:lxi:alt}
l_r(z_{0|k}):=\lambda\xi_k^2 \|e(k)\|_2^2, \quad \lambda>0,
\end{align}
which directly pushes $\xi_k$ to zero when $e(k)\neq 0$. Notice that if we were to use the original regularization cost proposed in \cite{kohler2022recursively}, i.e., $l_r(z_{0|k}):=\lambda\xi_k^2$, the inequality \eqref{eq:V:z} holds with $\sigma_r(\|e(k+1)\|)$ replaced by $\lambda$. 
Since $\lambda$ needs to be chosen sufficiently large, this only yields bounded trajectories in a very large set, which is not very useful. Also, this choice does not yield ISS for the interpolated state trajectory, but only input-to-state practical stability.  
\end{remark}

\section{Illustrative Example}\label{sec:IllustraticeExample}
To assess the effectiveness of the proposed scheme, we consider two nonlinear benchmark examples. The first example is a 2D cart-spring–damper (CSD) from \cite{raimondo2009min} and the second example is a nonlinear pendulum from \cite{dalla2022deep}. We analyze the accuracy of the learned observables by computing the $R^2$ coefficient for all the future prediction steps over the prediction horizon $j = 1, \dots, N$, i.e.,
\begin{equation}
    R^2 = 1 - \frac{\sum^{N_{test}}_{k=1} [y(k+j) -y_{j|k}]^2}{\sum^{N_{test}}_{k=1} [y(k+j) - \Bar{y}_j]^2},
\end{equation} 
where $N_{test}$ is the length of the test data set and $\Bar{y}_j = \frac{1}{N_{test}}\sum_{k=1}^{N_{test}}y(k+j)$. The training data set and test data set are shown in Appendix~\ref{app:2}. The proposed Koopman data-driven predictive control (KDPC) is compared with nonlinear model predictive control (NMPC). For NMPC, knowledge of the plants and state measurement is assumed while only input and output data is used for KDPC. For NMPC we use the input-state cost function $l(x_{i|k},u_{i|k}) := x_{i|k}^\top Qx_{i|k}+u_{i|k}^\top R u_{i|k}$ with terminal cost $l_N(x_{N|k}):= x_{N|k}^\top Px_{P|k}$ and the exact nonlinear model of the plant is used as prediction model and the NMPC problem is solved using FMINCON. More implementation details for Koopman data-driven predictive are discussed in Appendix~\ref{app:terminal}. The code can be found in the github repository\footnote{\url{https://github.com/todejong/KDPC_RSRFG}}
\paragraph{2D cart-spring–damper}\label{par:2chart}
To assess the effectiveness of the proposed scheme, we consider the discretized counterpart of the 2D cart-spring–damper (CSD) from \cite{raimondo2009min}, i.e.,     
\begin{figure}[t!]
  \centering
\includegraphics[scale=0.8,trim=0.0cm 0.0cm 0.0cm 0.0cm,clip]{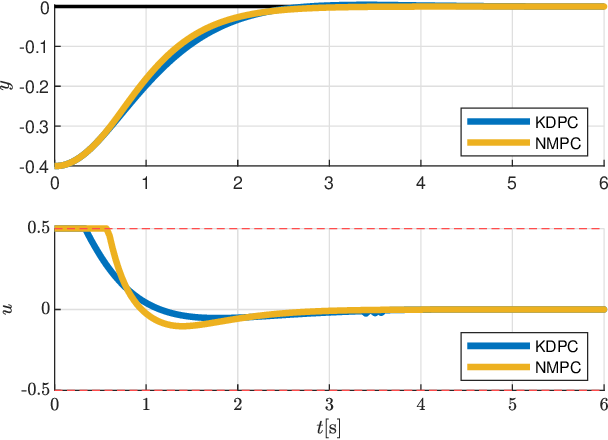}\vspace{-.2cm}
  \caption{Simulations chart: position $x_1$, control input $u$, interpolation variable $\xi$ and observables $\varphi_i$.}
  \label{fig:chart}
\end{figure}
\begin{figure}[t!]
  \centering
\includegraphics[scale=0.8,trim=0.0cm 0.0cm 0.0cm 0.0cm,clip]{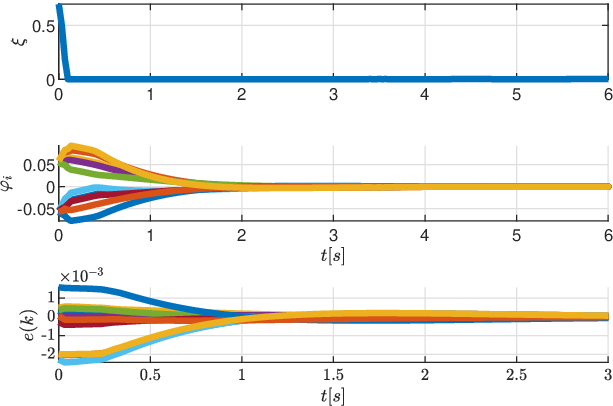}\vspace{-.2cm}
  \caption{Simulations chart: interpolation variable $\xi$, observables $\varphi_i$ and Koopman model error $e$.}
  \label{fig:chart}
\end{figure}
{\footnotesize
\begin{equation}\label{eqn:chart}
    \begin{split}
        x_1(k+1) &= x_1(k) + T_s x_2(k), \\
        x_2(k+1) &= x_2(k) - \frac{T_s k_0}{M} e^{-x_1(k)} x_1(k) - \frac{T_s \Bar{h}_d}{M} x_2(k) + \frac{T_s}{M}u(k), \\
        y(k) & = x_1(k),
    \end{split}
\end{equation}
}
considering the same parameters from \cite{raimondo2009min}. An open-loop identification experiment was performed to generate the output data using a multi-sine input constructed with the Matlab function idinput, with the parameters Range [$-4, 4$], Band [$0, 1$], Period $1000$, NumPeriod $1$ and Sine [$25, 40, 1$]. The observables are parameterized by means of a neural network with $8$ $\tanh$ activation functions and $2$ hidden layers and $T_{ini}=5$. The parameters are found by solving \eqref{eq:fit_basis} using the Adam optimizer with the MSE loss function from the Pytorch library with a learning rate of $1 \cdot 10^{-2}$. The resulting $R^2$ coefficient is computed, see Table~\ref{Tab:R2csd}. From this table it seems that the KDPC prediction model achieves accurate predictions on the test data with $R^2$ scores close to 1. Once the observables are learned the state data matrices $\textbf{Z}_p$ and $\textbf{Z}_f$ are found by feeding the same input-output data trough the learned observables. The prediction model matrices $\Psi^{LS}$ and $\Gamma^{LS}$ are then retrieved as in \eqref{eqn:LS_solution}. From them, we then extract the Koopman model matrices $\Tilde{A}$ and $\Tilde{B}$ as discussed at the end of Section~\ref{sec:koopmanModelIdentification} to construct the terminal set by solving \eqref{eq:3:2}. This can be done using standard techniques for linear MPC, i.e., first solve a corresponding linear matrix inequality for $(\tilde A, \tilde B)$ that yields a solution $(P,K)$ to \eqref{eq:3:2a} and then compute a polyhedral constraints admissible and invariant terminal set for $(\tilde A+\tilde B K)$ that satisfies \eqref{eq:3:2b} (e.g., using the MPT~3 Matlab Toolbox). The state of the system~\eqref{eqn:chart} is then initialized close to the constraints and stabilized using the Koopman DPC scheme in \eqref{eq:3:1} with the tuning parameters reported in Table~\ref{Tab:Controller_CSD}. More details about the implementation for the terminal cost and constraints are given in Appendix~\ref{app:terminal}.

We initialize the system at $x_1(0)=-0.4$m and $x_2(0)=0$m/s. The attained results are shown in \figurename{\ref{fig:chart}}. It is clear that the Koopman DPC controller remains recursively feasible and stabilizes the true nonlinear system while input constraints are active. Moreover, the interpolation variable is pushed to zero as soon as the input constraint becomes inactive, which implies that when this condition holds the lifted measured input-output data $\bx_\tini(k)$ are used and, hence, they are feasible. The results show that the observables converge to the origin, as expected when the prediction error vanishes. This is also observed in the third subplot in Figure~\ref{fig:chart}. Here it is shown that the Koopman model error defined by \eqref{eq:3:eK} is of the order $10^{-3}$ and goes to zero after initialization. From this and the
ISS property, we can conclude asymptotic stability of the Koopman state. All methods are able to stabilize the system while accounting for constraints. These simulations show that the KDPC controller achieves very similar performance compared with NMPC without any assumed knowledge of the plant.
\begin{table}[t!]
\centering
\caption{$R^2$ coefficients for CSD.}\label{Tab:R2csd}\vspace{-.2cm}
\resizebox{\columnwidth}{!}{%
\begin{tabular}{lccccccccccccc}
	$N$ & $1$ &  $3$& $5$ & $7$ & $9$ & $11$ & $13$ & $15$\\
	\hline
 \textbf{KDPC} & $0.9844$ & $0.9879$ & $0.9902$ & $0.9921$ & $0.9936$ & $0.9947$ & $0.9953$ & $0.9958$ \\
\end{tabular}\vspace{-.2cm}
}
\end{table}  
  
\begin{table}[t!]
\centering
\caption{Controller parameters and constraints CSD.}\label{Tab:Controller_CSD}\vspace{-.2cm}
\begin{tabular}{lcccccc}
	 & $Q$ & $R$ & $\lambda$ & $u_{min}$ & $u_{max}$ & $N$\\
	\hline
	\textbf{KDPC} & $10I_L$ & $1$ & $10^9$ & $-0.5$ & $0.5$ & $15$ \\
	\hline
 \textbf{NMPC} & $10I_n$ & $1$ & - & $-0.5$ & $0.5$ & $15$ \\
	\hline
\end{tabular}\vspace{-.2cm}
\end{table}


\paragraph{Pendulum}\label{par:pendulum}
To assess the effectiveness of the proposed scheme, we secondly consider the discretized nonlinear pendulum model from \cite{dalla2022deep}, i.e.,
\begin{equation}
    \begin{split}
        \begin{bmatrix}
            x_1(k+1) \\
            x_2(k+1)     
        \end{bmatrix} &= 
        \begin{bmatrix}
            1-\frac{b T_s}{J} & 0 \\
            T_s & 1    
        \end{bmatrix}  \begin{bmatrix}
            x_1(k) \\
            x_2(k)     
        \end{bmatrix} +  \begin{bmatrix}
            \frac{T_s}{J} \\
            0    
        \end{bmatrix} u(k) \\
         &- 
        \begin{bmatrix}
            \frac{M L g T_s}{2 J} \sin(x_2(k)) \\
             0
        \end{bmatrix}, \\
        y(k) &= x_2(k),
    \end{split}
\end{equation}
where $u(k)$ and $y(k)$ are the system input torque and
pendulum angle at time instant $k$, while $J = \frac{ML^2}{3}$, $M = 1$kg and $L = 1$m are the moment of inertia, mass and length of the pendulum. Moreover, $g = 9.81$m/s$^2$ is the gravitational acceleration, $b = 0.1$ is the friction coefficient and the sampling time $T_s = 1/30$s.
\begin{figure}[b!]
  \centering
\includegraphics[scale=0.8,trim=0.0cm 0.0cm 0.0cm 0.0cm,clip]{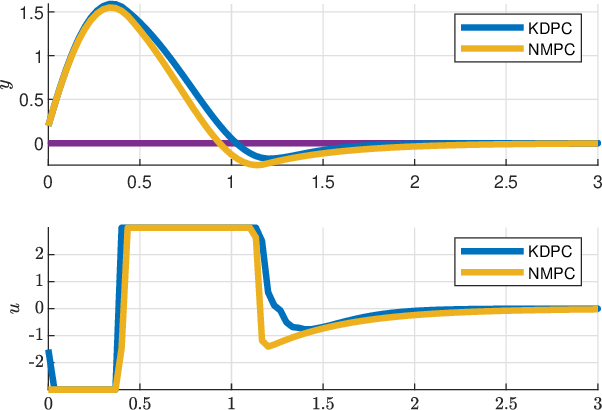}\vspace{-.2cm}
  \caption{Simulations pendulum: angle $x_2$, control input $u$, interpolation variable $\xi$ and observables $\varphi_i$.}
  \label{fig:pendulum}
\end{figure}
\begin{figure}[t!]
  \centering
\includegraphics[scale=0.8,trim=0.0cm 0.0cm 0.0cm 0.0cm,clip]{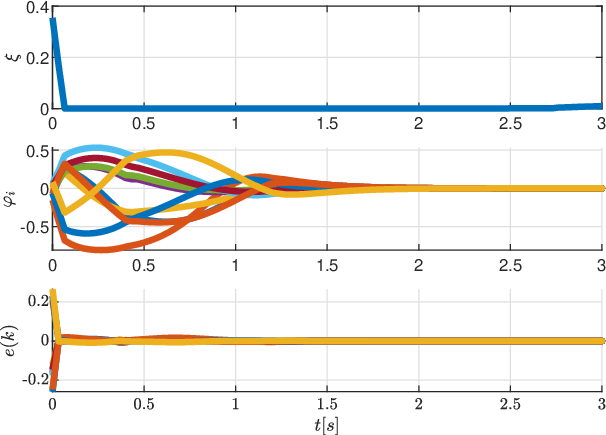}\vspace{-.2cm}
  \caption{Simulations pendulum: interpolation variable $\xi$, observables $\varphi_i$ and Koopman model error $e$.}
  \label{fig:pendulum}
\end{figure}
The output data is again generated by an open-loop identification experiment using a multi-sine input with the exact same parameters as for the 2D cart-spring–damper, see Appendix~\ref{app:2} for the test data and training data. The observables are parameterized by means of a neural network with $2$ hidden layers with $10$ $\tanh$ activation functions per layer and $T_{ini}=3$. More details about the implementations and the terminal cost and constraints are given in Appendix~\ref{app:terminal}.

We initialize the pendulum at $x_1(0)=7$ rad/s and $x_2(0)=0.2$ rad. The attained results are shown in \figurename{\ref{fig:pendulum}}. It is clear that all methods remains recursively feasible and stabilize the true nonlinear system while input constraints are active. The performance of the KDPC controller is again close to the one of NMPC. Just as for the other example, the interpolation variable is pushed to zero as soon as the input constraint becomes inactive and the observables converge to the origin. This is also observed in the third subplot in \figurename{\ref{fig:pendulum}}. Here it is shown that the Koopman model error defined by \eqref{eq:3:eK} quickly goes to zero after initialization. From this and the ISS property, we can conclude asymptotic stability of the Koopman state.

\begin{table}[t!]
\centering
\caption{$R^2$ coefficients for pendulum.}\label{Tab:Controller}\vspace{-.2cm}
\resizebox{\columnwidth}{!}{%
\begin{tabular}{lccccccccccccc}
	$N$ & $1$ &  $3$& $5$ & $7$ & $9$ & $11$ & $13$ & $15$\\
	\hline
 \textbf{KDPC} & $0.9139$ & $0.9109$ & $0.9070$ & $0.9016$ & $0.8960$ & $0.8899$ & $0.8822$ & $0.8717$ \\
\end{tabular}\vspace{-.2cm}
}
\end{table}   
    
\begin{table}[t!]
\centering
\caption{Controller parameters and constraints Pendulum.}\label{Tab:Controller}\vspace{-.2cm}
\begin{tabular}{lcccccc}
	 & $Q$ & $R$ & $\lambda$ & $u_{min}$ & $u_{max}$ & $N$\\
	\hline
	\textbf{KDPC} & $10I_L$ & $0.01$ & $10^9$ & $-3$ & $3$ & $15$ \\
	\hline
 \textbf{NMPC} & $10I_n$ & $0.01$ & - & $-3$ & $3$ & $15$ \\
	\hline
\end{tabular}\vspace{-.2cm}
\end{table}

\section{Conclusions and Future Works}
We have introduced a novel Koopman DPC scheme aimed at stabilizing general nonlinear systems. Our methodology involves a two-step approach. Firstly, we focus on learning the observables. Subsequently, we develop a best-fit state space model within the lifted Koopman space, that simplifies to a least squares problem in the lifted space. While our use of the linear-in-control input Koopman formulation enables the control problem to be framed as a quadratic program, such formulation is still incomplete. 

To address this limitation, we have devised an interpolated initial condition strategy for the Koopman DPC problem, ensuring recursive feasibility even in the presence of plant-model mismatches. Furthermore, we established sufficient conditions under which Input-to-State Stability is guaranteed. Through a simulation example, we demonstrated the effectiveness of our Koopman DPC approach in stabilizing nonlinear systems, thereby showcasing its potential.

\subsection{Future Works}
In our paper, we assumed a noiseless environment for both learning the Koopman model and conducting robust stability analysis. While our results establish recursive feasibility and robust stability for non-exact prediction models, they have not been extensively tested in the presence of noise. However, the interpolated initial condition ensures recursive feasibility if initially feasible even in the case of noise. The ISS bound also remains valid because it does not require that the prediction error is bounded. Future work should focus on deriving stochastic robust stability guarantees tailored to specific noise assumptions.
 
The performance of our proposed control scheme is inherently linked to the accuracy of the prediction model. However, it has been shown that the linear-in-control input Koopman formulation utilized in this paper may introduce prediction errors \cite{iacob2024koopman}. To enhance the performance of our control approach, it is recommended to extend our methodology by utilizing the complete Koopman formulation \cite{iacob2024koopman}. By addressing this incompleteness, we can potentially improve the accuracy and robustness of our control scheme. 


\newpage
\appendices
\section{Computation of Terminal Ingredients}\label{app:terminal}
The terminal cost $P$ and terminal constraints in the examples are computed by firstly extracting $\Tilde{A}$ and $\Tilde{B}$ from $\Psi^{LS}$ and $\Gamma^{LS}$ in \eqref{eqn:LS_solution}. Finding $P$ amounts to solving inequality~\eqref{eq:3:2a}. This is done by defining the variables $Y=K P^{-1}$ and $O= P^{-1}$, after which Schur complement is applied to obtain the LMI:
\begin{equation}
    \begin{split}
        &\begin{bmatrix}
            O & (\Tilde{A}O+\Tilde{B}Y)^\top & O & Y^\top \\
            (\Tilde{A}O+\Tilde{B}Y) & O & 0 & 0 \\
            O & 0 & Q^{-1} & 0 \\
            Y & 0 & 0 & R^{-1}
        \end{bmatrix} \succ 0, \\
        &O = O^\top, \quad O \succ 0. 
    \end{split}
\end{equation}
This LMI is solved by replacing $\succ 0$ by  $\succeq I10^{-6}$ and using the MOSEK solver. The terminal cost is found by $P=O^{-1}$ and the terminal control matrix is given by $K=Y O^{-1}$. Finding a terminal set $\Xset_T:=\{z \,: \, M_Nz\leq b_N\}$ amounts to computing a constraints admissible and invariant set for the dynamics $(\tilde A+\tilde B K)$. This is done by defining the lifted state constraint sets $\Xset := \{ z\,: \, A_z z\leq b_z\}$, the input constraint set $\Uset := \{ u\,: \, A_u u\leq b_u\}$ and the admissible constraint set $\Xset\Uset := \big\{ \{z\,: \, A_u K z \leq b_u\}\ \cap \Xset \big\}$ using the Polyhedron function from the MPT 3 toolbox. The terminal constraint set $\Xset_T$ for the dynamics $(\tilde A+\tilde B K)$ is found using the invariantSet function from the MPT 3 toolbox. 

The following MATLAB commands are used to find $M_N$ and $b_N$, i.e.,
\begin{enumerate}
    \item model = LTISystem('$A$', $\Tilde{A}+\Tilde{B}K$)
    \item Xset = Polyhedron($A_x$,$b_x$)
    \item Uset = Polyhedron($A_u$, $b_u$)
    \item XUset = Polyhedron($A_u K$,$b_u$) \& Xset
    \item InvSet = model.invariantSet('X',XUset)
\end{enumerate}
\newpage
\section{Identification data}\label{app:2}
\begin{figure}[h!]
  \centering
\includegraphics[scale=0.65,trim=0.0cm 0.0cm 0.0cm 0.0cm,clip]{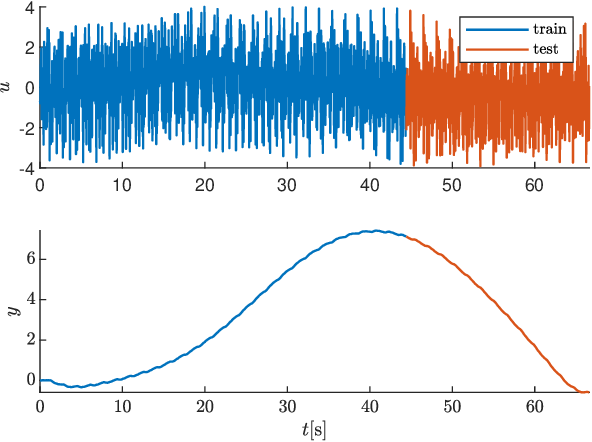}\vspace{-.2cm}
  \caption{Input-output data for 2D chart-spring damper.}
  \label{fig:IO_data_chart}
\end{figure}

\begin{figure}[h!]
  \centering
\includegraphics[scale=0.65,trim=0.0cm 0.0cm 0.0cm 0.0cm,clip]{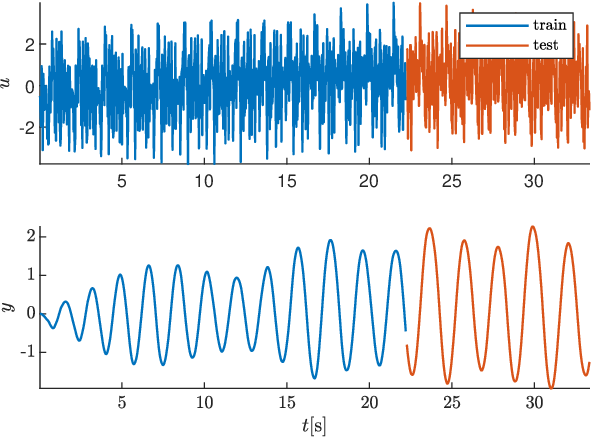}\vspace{-.2cm}
  \caption{Input-output data for Pendulum.}
  \label{fig:IO_data_pendulum}
\end{figure}
\end{document}